\documentclass[11pt]{article}
\usepackage[margin=1.1in]{geometry}
\usepackage{amsmath,amssymb,amsthm}
\usepackage{booktabs}
\usepackage[colorlinks=true,linkcolor=blue,citecolor=blue,urlcolor=blue]{hyperref}

\newtheorem{theorem}{Theorem}[section]
\newtheorem{lemma}[theorem]{Lemma}
\newtheorem{corollary}[theorem]{Corollary}
\newtheorem{proposition}[theorem]{Proposition}
\theoremstyle{remark}

\DeclareMathOperator{\mc}{mc}
\newcommand{\bip}{\beta}
\newcommand{\dmono}{d_{\mathrm{mono}}}

\title{The Erd\H{o}s $n^2/25$ max-cut conjecture for small multiples of five,\\
via a per-root-MaxCut envelope and blow-up integrality}
\author{Alper Ferudun\thanks{\texttt{alper@mercurycodelab.com}. The envelope
certificate, its exact-arithmetic verifier, and the brute-force ground-truth
checker accompany this note as ancillary files; development history at
\texttt{github.com/AlperTheKing}.}}
\date{June 2026}

\begin{document}
\maketitle

\begin{abstract}
Erd\H{o}s conjectured that every triangle-free graph on $N$ vertices can be made
bipartite by deleting at most $N^2/25$ edges; the bound would be sharp, attained
by the balanced blow-up $C_5[N/5]$. Writing $\bip(G)$ for the minimum number of
edges whose deletion makes $G$ bipartite and $a(N)=\max\{\bip(G):G$ triangle-free
on $N$ vertices$\}$, the conjecture is $a(N)\le N^2/25$, and for $N=5n$ it reads
$a(5n)\le n^2$. Balogh, Clemen and Lid\'ick\'y proved it for large $N$ in the two
density tails (edge density at most $0.2486$ or at least $0.3197$) and proved the
global bound $a(N)\le N^2/23.5$; the medium-density band remains open. We prove
\[
a(5n)=n^2\qquad\text{for every }1\le n\le 40,\quad\text{i.e. } N\in\{5,10,\dots,200\}.
\]
The proof is computer-assisted and combines three ingredients. (i) A
\emph{per-root-MaxCut envelope}: for the $107$ triangle-free $7$-root types, the
mean over types of the best per-type cut is an upper bound
$\dmono(W)\le U_7(W)$ that is \emph{tight} at the $C_5$-blow-up. (ii) An order-$10$
flag-algebra certificate---the per-root-MaxCut rows at $7$ and $8$ roots together
with rooted-Horn cuts and a manifestly-PSD moment block---bounds the envelope on
the medium band, $U_7(W)\le \tfrac{2}{25}+\delta$ with an explicit rational
$\delta\approx 4.8558\times10^{-5}$, for every triangle-free graphon $W$ of edge
density in $[0.2486,0.3197]$. (iii) The blow-up identity $\bip(G[t])=t^2\bip(G)$ plus
integrality of $\bip$ turns this into $\bip(G)\le n^2+\tfrac{25}{2}n^2\delta$ for
any $5n$-vertex band-density $G$, and $\tfrac{25}{2}n^2\delta<1$ for $n\le 40$;
the two density tails are handled by the Balogh--Clemen--Lid\'ick\'y bounds,
transferred to finite $N$ by the same blow-up. The envelope bound $\dmono\le U_7$
is a genuine graphon upper bound (each per-root rule is one global $2$-colouring),
the certificate is verified in exact rational arithmetic, the moment positivity is
Razborov's flag-algebra theorem exhibited as an exact Gram factorization, and the
bound is cross-checked against brute-force max-cut on all triangle-free graphs of
order at most $12$. The same envelope at orders $9$ and $10$ provably does not
reach the constant needed for larger $n$; we explain why, and locate the all-$n$
conjecture at a single self-tight obstruction.
\end{abstract}

\section{Introduction}

For a graph $G$ let $\bip(G)$ denote the \emph{bipartization number}: the minimum
number of edges whose removal leaves a bipartite graph, equivalently
$e(G)-\mc(G)$ where $\mc(G)$ is the size of a maximum cut. Erd\H{o}s asked how
large $\bip(G)$ can be for a triangle-free graph; he conjectured (see \cite{Bl23})
that
\begin{equation}\label{eq:conj}
\bip(G)\le \frac{N^2}{25}\qquad\text{for every triangle-free }G\text{ on }N\text{ vertices,}
\end{equation}
and that this is sharp. Sharpness is witnessed by the balanced blow-up of the
$5$-cycle: $C_5[m]$ (five independent sets of size $m$, complete bipartite between
cyclically consecutive sets) is triangle-free on $N=5m$ vertices with
$\bip(C_5[m])=m^2=N^2/25$. Setting $a(N)=\max\{\bip(G):G$ triangle-free on $N$
vertices$\}$, \eqref{eq:conj} is the statement $a(N)\le N^2/25$; on multiples of
five it reads $a(5n)\le n^2$, and the blow-up lower bound gives $a(5n)\ge n^2$, so
the conjecture on multiples of five is exactly
\[
a(5n)=n^2.
\]
This is problem \#23 in Bloom's database of Erd\H{o}s problems \cite{Bl23}.

The strongest progress to date is due to Balogh, Clemen and Lid\'ick\'y
\cite{BCL21}, using the flag-algebra method of Razborov \cite{Raz07}. They proved
the global bound $a(N)\le N^2/23.5$ and established the sharp bound \eqref{eq:conj}
in the two density tails: for $N$ large, $\bip(G)\le N^2/25$ whenever the edge
density of $G$ is at most $0.2486$ or at least $0.3197$ (their Theorem~1.3). The
\emph{medium-density band} $(0.2486,0.3197)$ remains open, and is where the
difficulty of \eqref{eq:conj} now resides; the extremal density $2/5$ of $C_5[m]$
lies in the high tail.

\paragraph{Result.} We resolve the conjecture on the first eleven multiples of five.

\begin{theorem}\label{thm:main}
$a(5n)=n^2$ for every integer $1\le n\le 40$; equivalently, every triangle-free
graph on $N$ vertices with $N\in\{5,10,15,\dots,200\}$ can be made bipartite by
deleting at most $N^2/25$ edges, and this is sharp.
\end{theorem}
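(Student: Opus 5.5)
The lower bound $a(5n)\ge n^2$ is the balanced blow-up $C_5[n]$, so the task is the upper bound. Fix a triangle-free $G$ on $N=5n$ vertices with $n\le 40$, and let $\rho=e(G)/\binom{N}{2}$ (or the graphon density $2e(G)/N^2$; I will choose whichever normalisation the blow-up limit uses). The plan is to pass from $G$ to a graphon through blow-ups. By the blow-up identity $\bip(G[t])=t^2\bip(G)$, the graphs $G[t]$ are triangle-free, and they converge to the step graphon $W_G$ of $G$. Their normalised bipartization $\bip(G[t])/(Nt)^2=\bip(G)/N^2$ is constant in $t$. So $\bip(G)/N^2$ equals the graphon quantity $\dmono(W_G)$, up to the factor-of-two convention, which I will fix once, namely $\dmono(W)=2\inf_{\text{cuts}}\int_{\text{mono}}W$. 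In this convention $C_5$ gives $2/25$.

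I then split by the density of $W_G$. In the two tails, edge density at most $0.2486$ or at least $0.3197$, I apply the Balogh--Clemen--Lid\'ick\'y theorem to $G[t]$ for $t$ large. This gives $t^2\bip(G)\le (Nt)^2/25$, hence $\bip(G)\le n^2$. Since the blow-up does not change edge density in the graphon normalisation, the density hypothesis is inherited, and a large order is available for free.

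In the medium band I combine the envelope inequality $\dmono(W)\le U_7(W)$ with the certificate bound $U_7(W)\le 2/25+\delta$. This gives $\bip(G)/N^2\le \tfrac12(2/25+\delta)$, that is, $\bip(G)\le n^2+\tfrac{25}{2}n^2\delta$. The left side is an integer. For $n\le 40$ we have $\tfrac{25}{2}\cdot1600\cdot\delta=20000\delta\approx0.971<1$, so $\bip(G)\le n^2$. The integrality step needs $n^2$ itself to be an integer, which it is; this is why the argument is restricted to multiples of five.

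The main obstacle is the medium-band certificate. One must verify in exact rational arithmetic that the order-$10$ flag-algebra combination (per-root MaxCut rows, rooted Horn cuts, and the PSD moment block with an exact Gram factorisation) yields $U_7\le 2/25+\delta$ uniformly over all triangle-free graphons with density in the band. One must also confirm that each per-root rule defining $U_7$ is a genuine global $2$-colouring, so that $\dmono\le U_7$ holds and is not merely a heuristic. A secondary point is the boundary densities $0.2486$ and $0.3197$, where the closed band and the tails should overlap so that no density is missed. I would also sanity-check the whole chain against brute-force max-cut for $N\le 12$.
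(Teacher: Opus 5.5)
Your proposal is correct and follows the paper's proof of Theorem~\ref{thm:main} essentially step for step. It uses the $C_5[n]$ lower bound and splits by the graphon density $2e(G)/N^2$. The Balogh--Clemen--Lid\'ick\'y tails are transferred to finite $N$ through $\bip(G[t])=t^2\bip(G)$, with the boundary value $0.2486$ assigned to the closed band as you anticipate. The band is then handled by $\dmono(W_G)=2\bip(G)/N^2$, the certified bound, and integrality, giving $\bip(G)\le n^2+\tfrac{25}{2}n^2\delta<n^2+1$ for $n\le 40$.

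The one difference is that the paper's Theorem~\ref{thm:band} certifies $\dmono(W)\le\tfrac{2}{25}+\delta$ directly, using a convex combination ($a_7+a_8=1$) of the $7$- and $8$-root envelope rows together with the Horn and moment rows, rather than $U_7\le\tfrac{2}{25}+\delta$ alone; that weaker statement is all your integrality step needs.
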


In particular $a(25)=25$, $a(30)=36$, \dots, $a(200)=1600$. The sequence $a(N)$ is
catalogued as OEIS A389646 \cite{A389646} and has been determined exactly only up
to $N=23$ by direct enumeration. Every multiple of five in that known range agrees
with Theorem~\ref{thm:main}: the catalogued values $a(5)=1$, $a(10)=4$, $a(15)=9$,
$a(20)=16$ are exactly $1^2,2^2,3^2,4^2$, an independent brute-force confirmation
on each case where the truth is already known. Theorem~\ref{thm:main} adds the
multiples of five from $25$ to $200$ ($a(25)=25$ through $a(200)=1600$), all beyond
the exhaustively determined range $N\le 23$ and hence new. The closely related
extremal question --- how many edges \emph{must} be deleted --- was studied by
Erd\H{o}s, Gy\H{o}ri and Simonovits \cite{EGS92}.

\paragraph{Method and honesty.} The proof is \emph{computer-assisted}. Its
mechanism is to upper-bound the bipartization density of a triangle-free graphon by
a \emph{per-root-MaxCut envelope} (Section~\ref{sec:env}) that is tight at the
extremal $C_5$-blow-up, certify the envelope on the medium band with an order-$10$
flag-algebra linear program (Theorem~\ref{thm:band}, certified value
$\delta\approx 4.8558\times10^{-5}$), and convert this approximate graphon bound into
an exact finite statement by integrality of $\bip$ under blow-up
(Section~\ref{sec:reduction}); since $\tfrac{25}{2}n^2\delta<1$ only for $n\le 40$,
the closure stops at $n=40$. The density tails are cited from \cite{BCL21}
(Section~\ref{sec:tails}); the assembly is in Section~\ref{sec:closure}; the
verification is in Section~\ref{sec:verification}.

We separate known from new. The conjecture and its sharp example are Erd\H{o}s's;
the tail bounds, the global $N^2/23.5$ bound, and the flag-algebra framework for
this problem are due to \cite{BCL21,Raz07}. New here are: the per-root-MaxCut
envelope and its order-$10$ certificate, the blow-up integrality reduction, and the
resulting finite-range Theorem~\ref{thm:main}. Theorem~\ref{thm:main} does
\emph{not} resolve \eqref{eq:conj}: for $n\ge 12$ the certificate margin exceeds
the integrality gap, and (Section~\ref{sec:open}) the envelope provably cannot be
sharpened enough at orders $9$ or $10$; the all-$n$ conjecture sits at a single
self-tight obstruction.

\section{The blow-up integrality reduction}\label{sec:reduction}

For a graph $G$ and integer $t\ge 1$, the \emph{$t$-blow-up} $G[t]$ replaces each
vertex by an independent set of size $t$ and each edge by a complete bipartite
graph. Two facts about bipartization under blow-up drive everything.

\begin{lemma}\label{lem:blowup}
For every graph $G$ and every $t\ge 1$, $\bip(G[t])=t^2\,\bip(G)$.
\end{lemma}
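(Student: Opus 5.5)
Since $e(G[t])=t^2e(G)$ and $\bip=e-\mc$, the plan is to prove the equivalent statement $\mc(G[t])=t^2\mc(G)$, one inequality at a time. Write $V(G[t])=\bigcup_{v\in V(G)}B_v$ with $|B_v|=t$. Each edge $uv$ of $G$ becomes the complete bipartite graph between $B_u$ and $B_v$, and there are no other edges.

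\emph{Lower bound $\mc(G[t])\ge t^2\mc(G)$.} Take a maximum cut $(S,V(G)\setminus S)$ of $G$ and lift it to $G[t]$ by placing all of $B_v$ on the side of $v$. Every cut edge $uv$ of $G$ then contributes all $t^2$ edges between $B_u$ and $B_v$ to the lifted cut. So the lifted cut has exactly $t^2\mc(G)$ edges.

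\emph{Upper bound $\mc(G[t])\le t^2\mc(G)$.} This is the only nontrivial direction, and I will handle it by averaging over random transversals. Fix a maximum cut $\chi:V(G[t])\to\{0,1\}$ of $G[t]$. Choose independently and uniformly one vertex $x_v\in B_v$ for each $v\in V(G)$. The map $v\mapsto x_v$ is an isomorphism from $G$ onto $G[t][\{x_v\}]$, because vertices in different blobs are adjacent exactly when the corresponding vertices of $G$ are. Hence $v\mapsto\chi(x_v)$ is a $2$-colouring of $G$, and its cut has at most $\mc(G)$ edges for every choice of transversal.

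Now fix any edge $xy$ of $G[t]$, with $x\in B_u$, $y\in B_v$ and $uv\in E(G)$. It lies inside the sampled transversal exactly when $x_u=x$ and $x_v=y$, which has probability $1/t^2$. By linearity of expectation, the expected number of $\chi$-cut edges of $G[t]$ inside the transversal is $\mc(G[t])/t^2$. This expectation is at most $\mc(G)$, which gives the upper bound.

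Combining the two bounds gives $\mc(G[t])=t^2\mc(G)$, and therefore
\[
\bip(G[t])=t^2e(G)-t^2\mc(G)=t^2\bip(G).
\]
The only point needing care is that the transversal induces a copy of $G$ and not merely a subgraph of it. This holds because blobs are independent sets and edges between blobs are all-or-nothing.
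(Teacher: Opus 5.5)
Your proof is correct, but it reaches the upper bound $\mc(G[t])\le t^2\mc(G)$ by a different route from the paper.

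The paper uses that the $t$ copies in each class are twins. A cut of $G[t]$ is therefore determined by the proportions $x_v\in[0,1]$ of each class placed on one side, and its value is $t^2\sum_{uv\in E(G)}(x_u+x_v-2x_ux_v)$. This function is multilinear, so its maximum over the cube $[0,1]^{V(G)}$ is attained at a $0/1$ vertex, i.e.\ at $t^2$ times an ordinary cut of $G$. Vertices of the cube are realisable as cuts of $G[t]$, so this one argument gives both inequalities.

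You instead fix an optimal cut $\chi$ of $G[t]$ and average it over uniformly random transversals, using that each edge of $G[t]$ lands in the transversal with probability $1/t^2$ and that each transversal induces a copy of $G$. The two arguments are one idea in two guises. Under your sampling, $\chi(x_v)$ is an independent Bernoulli variable whose mean is the proportion of $B_v$ coloured $1$. So the expected transversal cut is exactly the paper's multilinear function evaluated at that fractional assignment. In other words, you do randomized rounding, while the paper does deterministic rounding to a vertex of the cube.

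What each buys:
\begin{itemize}
\item Your version is more self-contained: it needs no extreme-point lemma and no explicit formula for the cut value in terms of class proportions.
\item The paper's formulation extends verbatim to arbitrary real proportions, i.e.\ to measurable colourings of the step graphon $W_G$. That is the natural setting for identifying $\dmono(W_G)$ with $2\bip(G)/N^2$ right after the lemma.
\item Your argument extends to that setting just as easily, by sampling one point from each class of $W_G$.
\end{itemize}
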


\begin{proof}
We show $\mc(G[t])=t^2\mc(G)$; since $e(G[t])=t^2 e(G)$ and
$\bip=e-\mc$, the claim follows. The $t$ copies of a vertex are pairwise
non-adjacent twins with identical neighbourhoods, so any cut value depends only on
\emph{how many} copies of each class lie on each side, not on which. Hence the cut
value of $G[t]$ is a multilinear function of the fractional assignment
$x_v\in[0,1]$ giving the proportion of the blown-up class of $v$ placed on one
side: an edge $uv$ contributes
$t^2\bigl(x_u(1-x_v)+x_v(1-x_u)\bigr)$. A multilinear function on the cube
$[0,1]^{V(G)}$ attains its maximum at a vertex of the cube, i.e.\ at an integral
$x\in\{0,1\}^{V(G)}$, which is an ordinary cut of $G$ scaled by $t^2$. Hence the
maximum fractional cut equals $t^2\mc(G)$ and there is no fractional--integral
gap, so $\mc(G[t])=t^2\mc(G)$.
\end{proof}

\noindent\textbf{Graphon density.} Normalising by $\binom{N}{2}\to N^2/2$, define
the \emph{monochromatic-pair density} of $G$ on $N$ vertices as
$\dmono(G)=\bip(G)/(N^2/2)=2\bip(G)/N^2$. By Lemma~\ref{lem:blowup} this is
blow-up invariant: $\dmono(G[t])=2t^2\bip(G)/(Nt)^2=2\bip(G)/N^2=\dmono(G)$ for
every $t$. Let $W_G$ be the step-graphon of $G$ (equivalently, of every $G[t]$).
Since the sequence $\dmono(G[t])$ is constant in $t$, its limit --- the graphon
value $\dmono(W_G)$ --- equals $2\bip(G)/N^2$ \emph{exactly}, with no $O(1/t)$
residual; this is the only point at which the finite $G$ enters the graphon bound.
Thus any valid upper bound
$\dmono(W)\le \tfrac{2}{25}+\delta$ over triangle-free graphons of a given edge
density applies verbatim to every finite $G$ of that density:
\begin{equation}\label{eq:transfer}
\bip(G)=\tfrac{N^2}{2}\,\dmono(G)\le \tfrac{N^2}{2}\Bigl(\tfrac{2}{25}+\delta\Bigr)
=\frac{N^2}{25}+\frac{N^2\delta}{2}.
\end{equation}

\begin{proposition}\label{prop:integrality}
Suppose $\dmono(W)\le \tfrac{2}{25}+\delta$ for every triangle-free graphon $W$
whose edge density lies in a band $B$. Then for every triangle-free graph $G$ on
$N=5n$ vertices whose edge density lies in $B$,
\[
\bip(G)\le n^2+\frac{25}{2}\,n^2\,\delta.
\]
In particular, if $\tfrac{25}{2}n^2\delta<1$ then $\bip(G)\le n^2$, because
$\bip(G)$ is a nonnegative integer.
\end{proposition}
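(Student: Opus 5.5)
The plan is to pass from the finite graph $G$ to its step-graphon $W_G$, apply the hypothesised graphon bound, and return to $G$ through the exact identity $\dmono(W_G)=2\bip(G)/N^2$ established after Lemma~\ref{lem:blowup}. Let $G$ be triangle-free on $N=5n$ vertices with edge density in $B$.

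First I check that $W_G$ is an admissible graphon for the hypothesis. It is triangle-free, since its homomorphism density of $K_3$ equals $t(K_3,G)$, which is $0$ because a homomorphism from a triangle into the loopless graph $G$ must use three distinct mutually adjacent vertices. Its edge density must also lie in $B$. This is the one point that needs care, because the graphon edge density $\int W_G=2e(G)/N^2$ differs from the finite density $e(G)/\binom{N}{2}$ by a factor $1-1/N$. I will read ``edge density'' uniformly in the graphon normalisation, which is the same $N^2/2$ normalisation used to define $\dmono$; with that convention the two statements agree. It is the same quantity that is blow-up invariant and equal to $\int W_G$, so the band condition transfers verbatim. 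I expect this bookkeeping about the density convention to be the only real obstacle. If the band is instead stated with $\binom N2$, the inclusion is slightly off, and one would note that the tails in Section~\ref{sec:tails} cover the boundary slack.

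Next I apply the hypothesis to get $\dmono(W_G)\le \tfrac{2}{25}+\delta$. By the blow-up discussion this means $2\bip(G)/N^2\le \tfrac2{25}+\delta$, which is \eqref{eq:transfer}:
\[
\bip(G)\le \frac{N^2}{25}+\frac{N^2\delta}{2}.
\]
Substituting $N=5n$ gives $N^2/25=n^2$ and $N^2\delta/2=\tfrac{25}{2}n^2\delta$, which is the displayed bound.

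Finally, for the integrality step, suppose $\tfrac{25}{2}n^2\delta<1$. Then $\bip(G)<n^2+1$. Since $\bip(G)$ and $n^2$ are both integers, it follows that $\bip(G)\le n^2$.
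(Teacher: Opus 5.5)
Your proof is correct and matches the paper's. Like the paper, you apply the hypothesis to the step-graphon $W_G$, convert back through the exact identity $\dmono(W_G)=2\bip(G)/N^2$ from Lemma~\ref{lem:blowup} to get \eqref{eq:transfer}, substitute $N=5n$, and finish by integrality; your explicit checks that $W_G$ is triangle-free and that edge density means $2e(G)/N^2$ (the convention the paper uses in Section~\ref{sec:closure}) are details the paper leaves implicit.
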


\begin{proof}
Apply \eqref{eq:transfer} with $N=5n$: $N^2/25=25n^2/25=n^2$ and
$N^2\delta/2=25n^2\delta/2$. The integrality rounding is immediate.
\end{proof}

With the certified value $\delta\approx 4.8558\times10^{-5}$
(Theorem~\ref{thm:band}), the threshold $\tfrac{25}{2}n^2\delta<1$ holds for
$n\le 40$: $n=40$ requires $\delta<2/(25\cdot 1600)=5.0000\times10^{-5}$, which holds
with a $2.88\%$ margin, while $n=41$ requires $\delta<4.7591\times10^{-5}$, which
fails. This is the source of the bound $n\le 40$ in Theorem~\ref{thm:main}.

\section{The per-root-MaxCut envelope}\label{sec:env}

The certificate bounds $\dmono$ not by a fixed cut but by an \emph{envelope} of
per-root maximum cuts, which is what lets it be tight at the extremal
$C_5$-blow-up. Let $\mathcal{T}_7$ be the set of the $107$ isomorphism classes of triangle-free
graphs on $7$ vertices (the \emph{root types}). Sampling an ordered $7$-tuple of
vertices, we map its induced graph to its canonical representative
$\sigma\in\mathcal{T}_7$ by a fixed canonical isomorphism (the lexicographically
least), read off each remaining vertex's profile in the resulting canonical root
coordinates, and write $p_\sigma(W)$ for the density of ordered $7$-tuples whose
induced graph is isomorphic to $\sigma$ (automorphisms and root embeddings
accounted for in the density), so $\sum_\sigma p_\sigma(W)=1$. Fix $\sigma$ and a set of $7$ roots of type $\sigma$. Each other
vertex $v$ has a \emph{profile} $\pi_\sigma(v)\in\{0,1\}^7$ (its adjacencies to the
roots); a \emph{rule} $c\colon\{0,1\}^7\to\{0,1\}$ assigns a side to each profile,
hence a $2$-colouring of the non-root vertices. Write
$g_{\sigma,c}(W)=p_\sigma(W)\bigl(\mathbb{E}[C\mid\sigma,c]-\tfrac{2}{25}\bigr)$ for
the (order-$\le 9$) flag density of the resulting monochromatic-pair deficit, where
$\mathbb{E}[C\mid\sigma,c]$ is the conditional monochromatic-pair density of the
colouring. Define the \emph{per-root-MaxCut envelope}
\[
  U_7(W)\;:=\;\sum_{\sigma\in\mathcal{T}_7}p_\sigma(W)\,\min_c\,\mathbb{E}[C\mid\sigma,c]
        \;=\;\frac{2}{25}+\sum_{\sigma\in\mathcal{T}_7}\min_c\,g_{\sigma,c}(W),
\]
the last equality using $\sum_\sigma p_\sigma=1$ and $p_\sigma\ge 0$. For each
$\sigma$, $\min_c\mathbb{E}[C\mid\sigma,c]$ is the minimum monochromatic density
over rules, i.e.\ a \emph{maximum cut of the profile graph} of $\sigma$.

\begin{lemma}[envelope soundness]\label{lem:env}
For every triangle-free $\{0,1\}$-valued step graphon $W$ --- in particular the
blow-up graphon $W_G$ of any finite triangle-free graph $G$ --- one has
$\ \dmono(W)\le U_7(W)$, with equality at the $C_5$-blow-up.
\end{lemma}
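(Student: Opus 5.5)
The plan is to compare $\dmono(W)$ with one explicit global colouring per root tuple, then average. First I fix normalisations. For a step graphon $W$, write $\dmono(W)=\inf_{f}\int W(x,y)\,\mathbf 1[f(x)=f(y)]\,dx\,dy$ over measurable $f\colon[0,1]\to\{0,1\}$. For $W=W_G$ this infimum equals $2\bip(G)/N^2$: by the multilinearity argument in the proof of Lemma~\ref{lem:blowup}, fractional assignments on the parts never beat integral ones. I also read $\mathbb{E}[C\mid\sigma,c]$ as the same integral, taken for the colouring $f_{R,c}(v)=c(\pi_\sigma(v))$ induced by the rule $c$ on a root tuple $R$ of type $\sigma$, averaged over such $R$. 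The roots themselves form a null set in the graphon, so how they are coloured is irrelevant. I would check this convention against the order-$\le 9$ flag expression defining $g_{\sigma,c}$.

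\emph{Inequality.} Fix a type $\sigma$ with $p_\sigma(W)>0$, a rule $c$, and a root tuple $R$ of type $\sigma$. The map $v\mapsto c(\pi_\sigma(v))$ is a single measurable $2$-colouring of the whole vertex space. Hence $\dmono(W)\le \int W\,\mathbf 1[f_{R,c}(x)=f_{R,c}(y)]$ pointwise in $R$. Averaging over $R$ conditioned on type $\sigma$, with $c$ held fixed, gives $\dmono(W)\le\mathbb{E}[C\mid\sigma,c]$. Since this holds for every $c$, we get $\dmono(W)\le\min_c\mathbb{E}[C\mid\sigma,c]$. Multiplying by $p_\sigma\ge0$ and summing with $\sum_\sigma p_\sigma=1$ yields $\dmono(W)\le U_7(W)$. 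The order of quantifiers is essential here: the rule depends only on the type, so it is chosen after averaging and inside the minimum. That is exactly the form of $U_7$, so no exchange of $\min$ and $\mathbb{E}$ is needed. Triangle-freeness is used only so that the types range over $\mathcal{T}_7$.

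\emph{Equality at $C_5[m]$.} Here $\dmono=2/25$, so by the inequality just proved it suffices to exhibit, for each type $\sigma$ of positive density, a rule $c$ with $\mathbb{E}[C\mid\sigma,c]\le 2/25$. Label the parts $V_1,\dots,V_5$ cyclically. A root tuple determines the set $P\subseteq\mathbb{Z}_5$ of parts it meets, and a vertex of part $i$ has profile equal to the indicator of the roots lying in $V_{i-1}\cup V_{i+1}$. So any rule is a $2$-colouring of the parts that is constant on classes of parts with equal profile. Parts $i$ and $j$ have equal profiles iff $P\cap\{i\pm1\}=P\cap\{j\pm1\}$.

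The goal for this step is to show that for every nonempty $P$ some such colouring has exactly one monochromatic consecutive pair $\{i,i+1\}$, hence density $2\cdot\tfrac1{25}$. Example: $P=\{1\}$ merges $\{2,5\}$ and $\{1,3,4\}$; colouring these two classes oppositely leaves only the pair $\{3,4\}$ monochromatic. The case $P=\{1,3\}$ behaves similarly. When $|P|\ge3$ the profiles separate more parts, which only adds freedom, and the standard colouring $1,3\mapsto A$, $2,4\mapsto B$, $5\mapsto A$ (up to rotation) should be realisable. Up to the dihedral symmetry only a handful of sets $P$ occur, so this is a finite case check. Finally, a type $\sigma$ is a function of how the $7$ roots distribute among the parts, so per-tuple optimality gives per-type optimality, since the bound is pointwise.

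I expect the main obstacle to be this equality case analysis. It requires verifying that, for every pattern $P$, the merged profile classes never force two monochromatic edge classes. It also requires confirming that $\mathbb{E}[C\mid\sigma,c]$, as encoded by the flag densities with canonical root coordinates, is exactly the integral used above, rather than one that counts root–nonroot pairs or normalises by $\binom{N}{2}$.
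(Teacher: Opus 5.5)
Your proof of $\dmono(W)\le U_7(W)$ is correct and is the paper's argument. A fixed rule on a fixed root tuple is one global measurable $2$-colouring, so its monochromatic density dominates $\dmono(W)$. Averaging over tuples of type $\sigma$ and then over types gives the bound; you minimise over $c$ after conditioning, the paper plugs in $c^\ast(\sigma)$, and these are the same.

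The gap is in the equality step, at ``a type $\sigma$ is a function of how the $7$ roots distribute among the parts, so per-tuple optimality gives per-type optimality.'' The implication runs the wrong way. The placement of the roots determines $\sigma$, but $\sigma$ does not determine the placement. Meanwhile the rule $c$ in $\min_c\mathbb{E}[C\mid\sigma,c]$ is fixed once per type and sees only canonical root coordinates. Every tuple's colouring has density at least $2/25$, so equality forces a \emph{single} $c$ to be optimal for almost every tuple of type $\sigma$ at once. Your case analysis over $P$ only supplies a colouring chosen after seeing the tuple.

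This matters. The empty type $\overline{K_7}$ arises both from all roots in $V_1$ and from roots split between $V_1$ and $V_3$. The rule $c(1^7)=B$, $c(x)=A$ otherwise is optimal for the first (only $\{3,4\}$ monochromatic). On a split tuple, however, it colours $V_1,V_3,V_4,V_5$ alike, giving $6/25$. Similarly, for $\sigma=K_{s,t}$ one side may be split between $V_i$ and $V_{i+2}$; those roots are twins in $\sigma$ and cannot be told apart in canonical coordinates.

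The repair is short, and your $P=\{1\}$ computation already contains it. Take the single rule $c(x)=x_1$, i.e.\ colour each vertex by its adjacency to the first canonical root. If that root lies in $V_j$, this puts $V_{j\pm1}$ on one side and $V_j,V_{j+2},V_{j+3}$ on the other, whatever the remaining six roots do. So exactly the pair $\{j+2,j+3\}$ is monochromatic, and the density is $2/25$ for every tuple of every type. Hence $\min_c\mathbb{E}[C\mid\sigma,c]\le 2/25$ for all $\sigma$, and together with the inequality, $U_7=2/25$ at the $C_5$-blow-up. This removes the need for your dihedral case check. It also proves by hand the equality that the paper only asserts and defers to the computations of Section~\ref{sec:verification}.
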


\begin{proof}
On a $\{0,1\}$ step graphon every vertex $v$ has a genuine adjacency bit to each
root, so the profile $\pi_\sigma(v)\in\{0,1\}^7$ is well defined and the rule below
yields an honest $2$-colouring. Sample $7$ roots $R$ as i.i.d.\ points of $W$;
condition on $R$ inducing
$\sigma\in\mathcal{T}_7$ and let $c^\ast(\sigma)=\arg\min_c\mathbb{E}[C\mid\sigma,c]$.
Colour every other vertex $v$ by $c^\ast(\sigma)\bigl(\pi_\sigma(v)\bigr)$. Since the
roots are a null set, this is a genuine measurable $2$-colouring $\varphi_R$ of $W$,
so its monochromatic-pair density is at least the minimum over all colourings,
$\ \mathrm{mono}(\varphi_R)\ge \dmono(W)$. Taking the expectation over $R$,
\[
  \dmono(W)\;\le\;\mathbb{E}_R\bigl[\mathrm{mono}(\varphi_R)\bigr]
        \;=\;\sum_{\sigma}p_\sigma(W)\,\mathbb{E}[C\mid\sigma,c^\ast(\sigma)]
        \;=\;U_7(W).
\]
At $W=C_5[N/5]$ the optimal $5$-class colouring is realised by $c^\ast(\sigma)$ for
the relevant root types, giving $U_7=\dmono=2/25$ (verified in
Section~\ref{sec:verification}).
\end{proof}

The $\{0,1\}$ hypothesis is all the finite theorem needs: every finite $G$ enters
only through its blow-up graphon $W_G$ (Section~\ref{sec:reduction}), a $\{0,1\}$
step graphon. (For a general graphon the same bound holds by independently rounding
the fractional profile, but we do not use it.) This is the decisive structural gain
over a fixed cut: a single fixed profile cut cannot be simultaneously near-optimal
across the band and at $C_5$, whereas the \emph{envelope} of per-root maxima equals
$\dmono$ at $C_5$ exactly. A relaxation built from a fixed convex combination of
cuts is, by contrast, not a valid graphon upper bound; only the per-type minimum
is --- this is precisely the normalization ($\sum_c\lambda_{\sigma,c}=1$ per type)
that the earlier fixed-cut relaxation lacked.

\begin{theorem}\label{thm:band}
There is an explicit nonnegative rational $\delta$ \textup{(}numerically
$\delta\approx 4.8558\times10^{-5}$; the exact value $\delta_{\mathrm{final}}$ is
recovered by the certificate re-verification of Section~\ref{sec:verification}\textup{)}
such that
\[
  \dmono(W)\;\le\;\tfrac{2}{25}+\delta
\]
for every triangle-free $\{0,1\}$ step graphon $W$ with
$\mathrm{LO}\le d_{\mathrm{edge}}(W)\le\mathrm{HI}$,
$\mathrm{LO}=\tfrac{1243}{5000}=0.2486$, $\mathrm{HI}=\tfrac{3197}{10000}=0.3197$.
Here $\delta$ is the optimum of an order-$10$ flag-algebra relaxation that
\emph{upper-bounds} the band maximum of $\dmono$ (we never claim equality; only this
upper bound is needed). It refines the order-$9$ per-root-MaxCut envelope of
Lemma~\ref{lem:env} \textup{(}$\dmono\le U_7$\textup{)}: besides the per-root rows at
$7$ roots it adds the per-root-MaxCut rows at $8$ roots, rooted-Horn cuts, and a
moment block, all of them valid for every triangle-free graphon.
\end{theorem}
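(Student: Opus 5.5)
The proof will be a flag-algebra duality argument. We will not compute $\delta$ by hand. Instead we exhibit a finite rational certificate and check, in exact arithmetic, that it forces $\dmono(W)\le\tfrac{2}{25}+\delta$ on the band. The chain of inequalities is
\[
\dmono(W)\le U_7(W)=\tfrac{2}{25}+\sum_{\sigma\in\mathcal{T}_7}\min_c g_{\sigma,c}(W),
\]
where the inequality is Lemma~\ref{lem:env}. It therefore suffices to show $\sum_\sigma\min_c g_{\sigma,c}(W)\le\delta$ for every triangle-free $\{0,1\}$ step graphon with $\mathrm{LO}\le d_{\mathrm{edge}}(W)\le\mathrm{HI}$.

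The first step is to linearize the per-type minima. For each $\sigma$ we fix nonnegative rational weights $\lambda_{\sigma,c}$ with $\sum_c\lambda_{\sigma,c}=1$. Then $\min_c g_{\sigma,c}\le\sum_c\lambda_{\sigma,c}g_{\sigma,c}$, and the right-hand side is a genuine linear combination of order-$\le 9$ flag densities. The normalization per type is exactly what keeps the bound valid. We do the same with the $8$-root per-root-MaxCut rows, which give further valid upper bounds on $\dmono$. We take a convex combination of the $7$-root and $8$-root envelopes, again with weights summing to one. Every term is lifted to order $10$ by the chain rule, which expresses each flag density as a combination of densities $p(H;W)$ of the triangle-free $10$-vertex graphs $H$.

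The second step is to add the band and positivity constraints. The band enters through the two nonnegative quantities $\mu_1(d_{\mathrm{edge}}-\mathrm{LO})$ and $\mu_2(\mathrm{HI}-d_{\mathrm{edge}})$ with $\mu_i\ge0$, again expanded at order $10$. The rooted-Horn cuts supply further inequalities valid on all triangle-free graphons. For the moment block we use Razborov's lemma: $[\![\,\mathbf f^{\top}Q\,\mathbf f\,]\!]_\tau\ge0$ whenever $Q\succeq0$, for every type $\tau$ and flag vector $\mathbf f$. We present each $Q_\tau=L_\tau D_\tau L_\tau^{\top}$ with rational $L_\tau$ and nonnegative diagonal $D_\tau$, so positivity can be read off directly.

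The certificate then asserts that, for every triangle-free $10$-vertex graph $H$, the coefficient of $p(H)$ in
\[
\delta-\Bigl(\text{envelope rows}\Bigr)-\Bigl(\text{band, Horn and moment terms}\Bigr)
\]
is nonnegative. Since $\sum_H p(H;W)=1$ and $p(H;W)\ge0$, this yields the theorem.

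The main obstacle is practical rather than conceptual. We must produce an exact rational certificate from the floating-point LP/SDP solution without losing the margin needed for $n=40$, namely $\delta<5\times10^{-5}$. We would first solve numerically, then round the $\lambda$, $\mu$ and Horn multipliers to rationals. The Gram matrices would be rounded and re-factored as exact $LDL^{\top}$ with $D$ clipped to be nonnegative. Whatever slack appears per $H$ is absorbed into $\delta$, defined as the maximum over $H$ of the resulting coefficient deficit. A secondary check is tightness at the $C_5$-blow-up: all the slack used by the certificate should vanish there, which also guides the rounding so that it stays exact at the extremal point.
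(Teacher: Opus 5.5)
Your plan is essentially the paper's proof. You mix the $7$- and $8$-root per-root-MaxCut envelopes, replace each per-type minimum by a $\lambda$-weighted average of fixed-rule rows, add band, rooted-Horn and Razborov moment terms, and check the resulting linear form coefficientwise over the $12{,}172$ triangle-free $10$-vertex graphs in exact arithmetic, absorbing the residual into $\delta$; the main technical difference is that the paper sidesteps your SDP-rounding obstacle by taking the Gram matrix to be a nonnegative LP combination $Q=\sum_c w_c\,vv_c\,vv_c^{\top}$ of fixed exact rational rank-one atoms, so $Q\succeq 0$ holds by construction. Keep your exact per-type normalization $\sum_c\lambda_{\sigma,c}=1$ (the paper's recorded check $\sum_c\lambda_{\sigma,c}\ge a_7$ is enough only for nonnegative rows, not for the deficits $g_{\sigma,c}$, which can be negative). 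Drop the proposed tightness check at $C_5[m]$, however: its edge density $2/5$ lies outside the band, the $\mu_{\mathrm{hi}}$ term is negative there, and so the band certificate has no reason to be tight there.
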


The bound $\delta$ is the optimum of a linear program over the order-$10$
($C_5$-lifted order-$9$) flag densities: a state vector $x\ge0$ over the $12{,}172$
triangle-free band states, $\sum x=1$, with envelope variables $u_\sigma$. Its rows
are (a) the band inequalities $\mathrm{LO}\le d_{\mathrm{edge}}(x)\le\mathrm{HI}$;
(b) the per-root-MaxCut envelope rows at $7$ roots ($107$ types) and at $8$ roots
($410$ roots), each $u_\sigma\le g_{\sigma,c}(x)$ for a \emph{fixed} Boolean rule $c$
(so $u_\sigma\le\min_c g_{\sigma,c}(x)$), tight at the $C_5$-blow-up; (c) rooted-Horn
cuts, valid global $2$-colourings that vanish at every odd cycle; and (d) the
moment-positivity rows $m(x)\ge0$. Every genuine band graphon induces a feasible $x$,
so the LP optimum $\delta$ \emph{upper-bounds} the true band maximum of $\dmono$; the
converse can fail --- a feasible $x$ need not come from any graphon --- which is why
the LP is a relaxation. By linear-programming duality, $\delta$ is certified by
nonnegative multipliers: per-root envelope distributions $\lambda_{\sigma,c}\ge0$
with $a_7+a_8=1$, where $a_7=\min_\sigma\sum_c\lambda_{\sigma,c}$ over the $7$-root
types and $a_8$ is the analogue over the $8$-root roots; band multipliers
$\mu_{\mathrm{lo}},\mu_{\mathrm{hi}}\ge0$; and Horn and moment multipliers. These
assemble into a single per-state dual-feasibility inequality
$R_{\mathrm{cbh}}(s)\ge\langle Q,P(s)\rangle$ over all $12{,}172$ states, where the
moment term is governed by a \emph{manifestly} positive-semidefinite Gram matrix
$Q=\sum_c w_c\,vv_c\,vv_c^\top$ with $w_c\ge0$ --- so no semidefinite solve or
Cholesky rounding is needed --- and
$\delta_{\mathrm{final}}=\rho+\mathrm{HI}\,\mu_{\mathrm{hi}}-\mathrm{LO}\,\mu_{\mathrm{lo}}
-\tfrac{2}{25}a_8+\varepsilon$, the slack $\varepsilon$ absorbing an exact
$O(10^{-8})$ per-state residual. This is the object re-verified exactly in
Section~\ref{sec:verification}.

\section{The density tails}\label{sec:tails}

\begin{theorem}[{\cite[Theorem 1.3]{BCL21}}]\label{thm:bcl}
For $N$ sufficiently large, every triangle-free graph $G$ on $N$ vertices with
edge density at most $0.2486$, and every such $G$ with edge density at least
$0.3197$, satisfies $\bip(G)\le N^2/25$.
\end{theorem}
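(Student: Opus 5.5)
This statement is a cited result, Theorem~1.3 of \cite{BCL21}, so the plan is to reconstruct the shape of that flag-algebra argument rather than to derive it from anything proved here. We work in the limit: suppose the statement fails, so there is a sequence of triangle-free graphs $G_k$ on $N_k\to\infty$ vertices. Each $G_k$ has edge density at most $0.2486$, or each has density at least $0.3197$, and each satisfies $\bip(G_k)>N_k^2/25$. Passing to a convergent subsequence gives a triangle-free graphon $W$ whose edge density lies in the closed tail and with $\dmono(W)\ge 2/25$. Unlike the band case, we cannot use blow-up integrality. The reason is that graphs near the boundary may exceed $N^2/25$ only by lower-order terms. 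So the proof must give a \emph{strict} graphon inequality plus a stability step, or else work with exact finite counts. This is why the statement is asymptotic.

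\textbf{Step 1: high tail (density $\ge 0.3197$).} Here the extremal configuration $C_5[N/5]$, of density $2/5$, is present. We bound $\dmono$ by a fixed family of cuts determined by rooted neighbourhoods: for instance, a cut given by the neighbourhood of a vertex, or of a $C_5$-root, together with their averaged flag densities. We then run an SDP over triangle-free flags of order about $7$ or $8$. This shows that $\dmono(W)\le 2/25$, with equality only at the $C_5$-blow-up graphon. Next comes a stability argument. A near-extremal $G$ is close to $C_5[N/5]$ in edit distance. Taking the partition induced by the $C_5$ structure and moving misplaced vertices one at a time, each local move strictly decreases the number of monochromatic edges unless the graph is exactly balanced. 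This gives the exact bound $\bip(G)\le N^2/25$ for large $N$.

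\textbf{Step 2: low tail (density $\le 0.2486$).} The trivial bound $\bip\le e(G)/2$ gives only $\bip\le 0.2486\cdot N^2/4\approx 0.062N^2$, which exceeds $N^2/25=0.04N^2$, so it is not enough. We would instead prove a flag-algebra inequality of the form $\dmono(W)\le 2/25-\eta$ for some $\eta>0$ throughout this tail, again using neighbourhood cuts. A strict inequality with positive slack removes the need for stability. The finite statement then follows because graph densities converge to graphon densities up to $O(1/N)$ errors.

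The main obstacle is Step 1 near the tail boundary $0.3197$, together with the stability analysis. The SDP certificate must be tight at $C_5$, and the stability step needs a careful exact-structure argument. In practice we would simply invoke \cite[Theorem~1.3]{BCL21} as stated, which is what the paper does.
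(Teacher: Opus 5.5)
The paper gives no proof of this statement; it imports it from \cite[Theorem~1.3]{BCL21}. Your last sentence does the same, so as a justification your proposal coincides with the paper's. The reconstruction around it, however, rests on a misconception that is worth correcting, because it contradicts the mechanism the paper itself relies on.

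You claim that blow-up integrality is unavailable in the tails. Your reason is that a counterexample may exceed $N^2/25$ only by lower-order terms, so a tight (non-strict) graphon inequality would need a stability step on top. That loss happens only because you pass to the limit of a sequence $G_k$. Apply the graphon inequality instead to the step graphon $W_G$ of a single counterexample. By Lemma~\ref{lem:blowup}, $\dmono(W_G)=2\bip(G)/N^2$ exactly, so $\bip(G)>N^2/25$ gives $\dmono(W_G)>2/25$ with no $O(1/N)$ error. A non-strict certificate $\dmono(W)\le 2/25$ on the tail therefore already gives a contradiction, for every $N$ with $2e(G)/N^2$ in the tail. Such a certificate is necessarily tight at $C_5$ in the high tail.

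This is just \eqref{eq:transfer} with $\delta=0$. The same identity $\bip(G[t])=t^2\bip(G)$ is what Corollary~\ref{cor:tails} uses to move between finite $N$ and the asymptotic statement. Likewise, the strict slack $\eta>0$ you ask for in the low tail is unnecessary.

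So the stability step is not needed, which is fortunate, because as sketched it is not an argument. A flag-algebra certificate proves only the non-strict inequality; it does not show that the $C_5$-blow-up is the unique maximiser. And the claim that moving misplaced vertices one at a time ``strictly decreases the number of monochromatic edges unless the graph is exactly balanced'' is unsupported. Even if true, it would not by itself bound $\bip(G)$ by $N^2/25$ for a near-extremal $G$.
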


\begin{corollary}\label{cor:tails}
For \emph{every} $N$ and every triangle-free $G$ on $N$ vertices whose
\emph{graphon} edge density $\dmono$-normalisation $2e(G)/N^2$ is at most $0.2486$
or at least $0.3197$, $\bip(G)\le N^2/25$.
\end{corollary}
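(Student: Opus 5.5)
The plan is to transfer Theorem~\ref{thm:bcl} from large $N$ to every $N$ using the blow-up invariance of Lemma~\ref{lem:blowup}, exactly as in Section~\ref{sec:reduction}. Fix a triangle-free $G$ on $N$ vertices whose normalised edge density $2e(G)/N^2$ is at most $0.2486$ or at least $0.3197$. For $t\ge1$ consider the blow-up $G[t]$ on $Nt$ vertices. Three facts are needed. First, $G[t]$ is triangle-free, since a triangle in $G[t]$ uses three distinct classes (each class is independent) and so projects to a triangle in $G$. Second, $2e(G[t])/(Nt)^2=2t^2e(G)/(N^2t^2)=2e(G)/N^2$, so $G[t]$ stays in the same tail. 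Third, Lemma~\ref{lem:blowup} gives $\bip(G[t])=t^2\bip(G)$.

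Choosing $t$ large enough that $Nt$ exceeds the threshold $N_0$ of Theorem~\ref{thm:bcl}, we get $t^2\bip(G)=\bip(G[t])\le (Nt)^2/25$. Dividing by $t^2$ yields $\bip(G)\le N^2/25$. No integrality rounding is needed here, because the tail bound is already sharp with no additive error.

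The one point I expect to need care is the density normalisation. Theorem~\ref{thm:bcl} as stated by \cite{BCL21} refers to ``edge density'', which is either $e/\binom{N}{2}$ or $2e/N^2$. Under blow-up only $2e/N^2$ is exactly invariant. The quantity $e(G[t])/\binom{Nt}{2}$ equals $\frac{2e(G)}{N^2}\cdot\frac{Nt}{Nt-1}$, which decreases to $2e(G)/N^2$ as $t\to\infty$.

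In the high tail this causes no trouble, since $e/\binom{\cdot}{2}\ge 2e/N^2\ge0.3197$ for every $t$. In the low tail it does matter. If $2e(G)/N^2\le 0.2486$ with strict inequality, then for $t$ large the $\binom{\cdot}{2}$-density of $G[t]$ is also below $0.2486$. In the boundary case of equality, I would argue that the \cite{BCL21} result is a graphon (limit) statement with the threshold as a closed condition on $2e/N^2$, which is why the corollary is phrased in the graphon normalisation. If that reading fails, I would fall back on the observation that the band certificate of Theorem~\ref{thm:band} covers the closed interval $[\mathrm{LO},\mathrm{HI}]$, so boundary densities are absorbed by the band case in the final assembly anyway.

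So the main step is checking which normalisation \cite{BCL21} uses and making sure that, for some sufficiently large $t$, $G[t]$ meets their hypothesis exactly, with the endpoint $0.2486$ as the only delicate case.
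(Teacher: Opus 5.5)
Your argument follows the paper's proof: blow up, note that $G[t]$ is triangle-free, use Lemma~\ref{lem:blowup} to get $\bip(G[t])=t^2\bip(G)$, apply Theorem~\ref{thm:bcl} once $Nt$ is large, divide by $t^2$, and send the delicate endpoint to the closed band of Theorem~\ref{thm:band}. The paper uses \cite{BCL21} only for strict inequalities and defers \emph{both} endpoints to the band. Your observation that $e(G[t])/\binom{Nt}{2}\ge 2e(G)/N^2$ settles the endpoint $0.3197$ directly is therefore a small improvement on the paper.

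One caveat applies equally to the paper: the band fallback does not prove the corollary as stated. At $2e(G)/N^2=0.2486$, Theorem~\ref{thm:band} gives only $\bip(G)\le N^2/25+N^2\delta/2$. That rounds down to $N^2/25$ only for $N=5n$ with $n\le 40$, yet the corollary claims every $N$. This density is attained exactly when $100\mid N$, so for example $N=300$ is not covered.

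You can close this without knowing which normalisation \cite{BCL21} uses. Let $G'$ be $G[t]$ together with one isolated vertex. Then:
\begin{itemize}
\item $G'$ is triangle-free on $Nt+1$ vertices, and $\bip(G')=t^2\bip(G)$;
\item both $2e(G')/(Nt+1)^2$ and $e(G')/\binom{Nt+1}{2}=\frac{2e(G)}{N^2}\cdot\frac{Nt}{Nt+1}$ are strictly below $0.2486$.
\end{itemize}
Hence Theorem~\ref{thm:bcl} gives $t^2\bip(G)\le (Nt+1)^2/25$ for large $t$, i.e.\ $\bip(G)\le (N+1/t)^2/25$. Letting $t\to\infty$ yields $\bip(G)\le N^2/25$. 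Combined with your treatment of the upper endpoint, this proves the corollary for every $N$ without appealing to the band.
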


\begin{proof}
Apply Theorem~\ref{thm:bcl} to the blow-ups $G[t]$. Each $G[t]$ is triangle-free on
$Nt$ vertices; in the convention of \cite{BCL21} its edge density is
$e(G[t])/\binom{Nt}{2}=\bigl(2e(G)/N^2\bigr)\cdot\tfrac{Nt}{Nt-1}\to 2e(G)/N^2$ as
$t\to\infty$. Hence for $G$ with $2e(G)/N^2<0.2486$ (resp.\ $>0.3197$ strictly), the
blow-up density is eventually $<0.2486$ (resp.\ $>0.3197$), so Theorem~\ref{thm:bcl}
gives $\bip(G[t])\le (Nt)^2/25$ for large $t$; by Lemma~\ref{lem:blowup},
$t^2\bip(G)\le t^2N^2/25$, whence $\bip(G)\le N^2/25$. The boundary values
$2e(G)/N^2\in\{0.2486,0.3197\}$ lie in the \emph{closed} certificate band of
Theorem~\ref{thm:band} and are covered there, so the band and the two tails cover
all densities with no gap.
\end{proof}

\section{Proof of Theorem~\ref{thm:main}}\label{sec:closure}

Let $G$ be triangle-free on $N=5n$ vertices, $1\le n\le 40$, with monochromatic-pair
density $d:=2e(G)/N^2$. We bound $\bip(G)$ by $n^2$ in all three density regimes;
sharpness then follows from $\bip(C_5[n])=n^2$.

\smallskip\noindent\emph{Low tail ($d\le 0.2486$).} By Corollary~\ref{cor:tails},
$\bip(G)\le N^2/25=n^2$.

\smallskip\noindent\emph{Band ($0.2486\le d\le 0.3197$).} By
Theorem~\ref{thm:band} (whose band is closed) the limiting graphon $W_G$ satisfies
$\dmono(W_G)\le\tfrac2{25}+\delta$, and $\dmono(W_G)=2\bip(G)/N^2$ by
Section~\ref{sec:reduction}. Proposition~\ref{prop:integrality} gives
$\bip(G)\le n^2+\tfrac{25}{2}n^2\delta<n^2+1$ (as $n\le 40$), so the integer
$\bip(G)$ is at most $n^2$.

\smallskip\noindent\emph{High tail ($d\ge 0.3197$).} By
Corollary~\ref{cor:tails}, $\bip(G)\le n^2$.

\smallskip Every triangle-free $G$ on $5n$ vertices, $n\le 40$, therefore has
$\bip(G)\le n^2$, and $C_5[n]$ attains it. Hence $a(5n)=n^2$. \qed

\section{Verification, certificates, and ground truth}\label{sec:verification}

The result rests on (a) the envelope soundness $\dmono\le U_7$
(Lemma~\ref{lem:env}) and (b) the validity of the order-$10$ certificate of
Theorem~\ref{thm:band}. We guard both on independent levels; all numeric checks are
in exact rational arithmetic (Python \texttt{fractions}), and the scripts are
ancillary files.

\paragraph{(1) Envelope soundness $\dmono\le U_7$.} Lemma~\ref{lem:env} is a
genuine graphon upper bound: each per-root rule defines one global $2$-colouring,
whose monochromatic density is at least $\dmono$, and $U_7$ is their type-averaged
minimum. Unlike a bound obtained by averaging the bipartization of sampled finite
subgraphs (which is invalid, since bipartization is non-local), no averaging trap
arises. We confirmed $\dmono(W)\le U_7(W)$ by exhaustive enumeration of all
triangle-free graphs of orders $6$ and $7$ (zero violations; equality only at
$C_5$), and on a graph zoo: $C_5$ (ratio $1.000$, tight), $C_7$ ($1.002$), $C_9$
($1.127$), $C_{11}$ ($1.573$), $C_{13}$ ($2.177$), the Petersen graph ($1.013$),
$C_5\cup C_5$ ($1.016$). The envelope also survives the adversarial in-band
weighted $C_{11}$-blow-up on which a fixed-cut relaxation fails: there
$\dmono=8/2025\approx 0.0040$ while $U_7\approx 0.0108$.

\paragraph{(2) Certificate re-verification (two independent passes).} The order-$10$
Horn certificate --- the dual multipliers $(\rho,\mu_{\mathrm{lo}},\mu_{\mathrm{hi}},
\lambda_{\sigma,c})$ and the Gram weights $w_c$ --- is checked twice in exact
\texttt{fractions}. First, the producer's verifier re-derives all five
dual-feasibility conditions: $a_7+a_8=1$; $\sum_c\lambda_{\sigma,c}\ge a_7$ for each
of the $107$ seven-root types; $\sum_c\lambda_{\sigma,c}\ge a_8$ for each of the $410$
eight-root roots; the per-state inequality $R_{\mathrm{cbh}}(s)\ge\langle Q,P(s)\rangle$
for all $12{,}172$ states (a worst-case residual $-1.02\times10^{-8}$ is absorbed by
the exact slack $\varepsilon$); and $\delta_{\mathrm{final}}<5\times10^{-5}$. Second,
an \emph{independent} gate re-derives
$\delta_{\mathrm{final}}=4.8557798\times10^{-5}$ \emph{directly from the raw dual},
matching the producer's exact \texttt{Fraction} bit for bit, and confirms both
$a_7+a_8=1$ and $w_c\ge0$ for every Gram atom. Since
$\delta_{\mathrm{final}}<2/(25\cdot 1600)=5.0\times10^{-5}$ this gives $n\le 40$ with a
$2.88\%$ margin; the bound is \emph{exact} (rational, not floating-point) and so
carries no rounding risk. We do not claim $n=41$: that needs
$\delta<4.7591\times10^{-5}$, which the non-deterministic moment LP does not robustly
attain.

\paragraph{(3) Moment positivity (graphon-level, exactly certified).} The moment
rows enter as a sum-of-squares term with nonnegative multipliers; per flag the
atoms are negative, and positivity is a graphon-level statement. For each root
profile $\sigma$ the averaged flag products factor through a Gram form,
$\sum_H p_W(H)P^\sigma(H)=\rho_\sigma c_\sigma M^\sigma(W)$ with a positive scalar
$\rho_\sigma c_\sigma$ and $M^\sigma(W)=\sum_c w_c q_c q_c^\top$ a sum of
nonnegatively weighted ($w_c\ge0$) rank-one terms, hence manifestly positive
semidefinite --- the exact rational form of Razborov's positivity theorem
\cite{Raz07}. We verified each Gram form in exact rational arithmetic for all four
profiles in use and for representative band graphons (the extremal $C_5$, in-band
$C_7$, the Petersen graph, $C_9$, a weighted $C_5$): each is symmetric, matches an
independent double-sum recomputation to exact-zero discrepancy, and gives the
inner product against the certificate's positive-semidefinite matrix
$\ge 0$ exactly. The certificate's own moment term is governed by a second,
manifestly positive-semidefinite Gram $Q=\sum_c w_c\,vv_c\,vv_c^\top$ with $w_c\ge0$:
these weights are found by a plain linear program over $394$ exact atoms ($77$ in the
support), so $Q\succeq0$ holds by construction --- no eigenvector recovery,
semidefinite solve, or Cholesky rounding enters. The averaging is taken on the
blow-up graphon (the same $W_G$ as in Section~\ref{sec:reduction}); finite
distinct-subset densities carry an $O(1/n)$ disjointness defect that does not affect
the graphon bound.

\paragraph{(4) Brute-force ground truth.} Independently of all flag machinery, we
enumerated every triangle-free graph of order $9,10,11,12$, computed $\bip$ by
exact maximum cut, and recorded $\dmono$ for those in the band. The in-band maxima
observed were $0.0494$, $0.0400$, $0.0496$, $0.0556$ respectively, all far below
the certified $\tfrac2{25}+\delta\approx 0.0800$; no graph approaches, let alone
violates, the bound. The max-cut routine was validated on known values
($C_5\!:4$, Petersen$:12$).

\paragraph{What is checked vs.\ cited.} Levels (1)--(4) are computer-checked, the
exact-arithmetic ones in rational arithmetic; the envelope soundness is an
elementary graphon argument; the certificate's decision step (the LP optimum
$\eta=\delta$ and its dual feasibility) is exact, with no \texttt{native\_decide}
and no floating-point threshold. The moment positivity rests on the cited theorem
of \cite{Raz07}, exhibited here as an exact Gram factorization.
Theorem~\ref{thm:bcl} is cited from \cite{BCL21}.

\section{The obstruction to all $n$}\label{sec:open}

Theorem~\ref{thm:main} stops at $n=40$ because Proposition~\ref{prop:integrality}
needs $\tfrac{25}{2}n^2\delta<1$, and the certified $\delta\approx 4.8558\times10^{-5}$
gives exactly $n\le 40$. A larger range needs a still smaller $\delta$. The order-$10$
certificate is essentially at the flag-LP finite ceiling: the per-root-MaxCut envelope
at $7$ and $8$ roots, even augmented with rooted-Horn rows and the moment block,
plateaus near $\delta\approx 4.85\times10^{-5}$ (and its non-deterministic moment LP
does not robustly reach the $4.76\times10^{-5}$ that $n=41$ would need), and order $11$
is computationally out of reach. The sharp asymptotic conjecture needs $\delta=0$
exactly, which no finite order of this relaxation delivers.

The natural route to all $n$ is the asymptotic inequality $\dmono(W)\le 2/25$
itself. It reduces (after fixing a maximum cut, with $M$ the monochromatic edges
and $B$ the bipartite cut-graph) to a single \emph{self-tight} statement on the
invariant $\Gamma=\sum_{uv\in M}(d_B(u,v)+1)^2$: namely $\Gamma\le N^2$ when $B$ is
connected, which gives $\bip\le N^2/25$ since each bad edge has $d_B$ even and
$\ge 4$ (triangle-freeness), so $\ell^2\ge 25$. The per-component reduction and the
single-block base case (sharp constant $25=5^2$ by AM--GM, tight at $C_5[q]$) are
elementary; the open core is the connected-$B$ transfer in the regime where the
endpoints of a bad edge are joined by $B$-paths of different even lengths, and it is
self-tight: every flag-algebra, signature, or quadratic-cut certificate we tried
equals $\Gamma$ at the $C_5[q]$ extremal, hence only re-proves $\Gamma\le\Gamma$.
We regard isolating the conjecture to this single estimate, and establishing it
computationally for the multiples of five up to $N=200$, as the contribution of this
note.

\section*{Ancillary files}

\texttt{step1\_v2\_independent\_gate.py} (the independent exact-\texttt{Fraction}
verification: re-derives $\delta_{\mathrm{final}}$ from the raw dual, and checks
$a_7+a_8=1$ and the manifest-PSD weights $w_c\ge0$) and \texttt{complete\_v2\_cert.py}
(the producer's one-command verifier of all five dual-feasibility conditions);
\texttt{moment\_gram\_lp.py} and \texttt{moment\_gram\_exact\_verify.py} (the manifest
Gram block $Q=\sum_c w_c\,vv_c vv_c^\top$); \texttt{brute\_dmono.py} with
\texttt{flag\_engine.py} (a self-contained independent brute-force max-cut
ground-truth checker, requiring no certificate); the moment-PSD Gram-certificate
scripts \texttt{g1\_exact\_psd.py}, \texttt{g1\_graphon\_density.py}; the compact dual
(\texttt{horn\_dual.pkl}, \texttt{moment\_gram\_w.pkl}, \texttt{v2\_cert\_complete.pkl});
a \texttt{README} giving the order parameter, the band endpoints, and the exact
rational $\delta_{\mathrm{final}}$; and a \texttt{SHA256SUMS} manifest. The full
order-$10$ LP cache and lifted-state tables exceed arXiv ancillary limits and are
regenerable from the scripts.


\begin{thebibliography}{9}

\bibitem{A389646} E.~Beregovsky et al., \emph{Sequence A389646} (Maximum number of
edges that need to be removed from a triangle-free graph on $n$ vertices to make it
bipartite), The On-Line Encyclopedia of Integer Sequences,
\url{https://oeis.org/A389646}; see also the SeqFan discussion (Oct.\ 2025),
\url{https://groups.google.com/g/seqfan/c/IVRAZqHGs3A}.

\bibitem{BCL21} J.~Balogh, F.~C.~Clemen and B.~Lid\'ick\'y, \emph{Max cuts in
triangle-free graphs}, arXiv:2103.14179 (2021); Eurocomb 2021.

\bibitem{Bl23} T.~Bloom, \emph{Erd\H{o}s problem \#23}, Online database of
Erd\H{o}s problems, \url{https://www.erdosproblems.com/23}.

\bibitem{EGS92} P.~Erd\H{o}s, E.~Gy\H{o}ri and M.~Simonovits, \emph{How many edges
should be deleted to make a triangle-free graph bipartite?}, in: Sets, Graphs and
Numbers (Budapest, 1991), Colloq.\ Math.\ Soc.\ J\'anos Bolyai \textbf{60},
North-Holland, 1992, 239--263.

\bibitem{Raz07} A.~A.~Razborov, \emph{Flag algebras}, J.\ Symbolic Logic
\textbf{72} (2007), no.~4, 1239--1282.

\end{thebibliography}
\end{document}